\DeclareMathAlphabet{\mathpzc}{OT1}{pzc}{m}{it}
\newtheorem{theorem}{Theorem}[section]
\newtheorem*{theorem*}{Theorem}
\newtheorem{lemma}[theorem]{Lemma}
\newtheorem*{lemma*}{Lemma}
\newtheorem{corollary}[theorem]{Corollary}
\newtheorem*{conjecture*}{Conjecture}
\theoremstyle{definition}
\newtheorem{definition}[theorem]{Definition}
\theoremstyle{remark}
\newtheorem{remark}[theorem]{Remark}
\newcommand{\disp}{\displaystyle}
\newcommand{\R}{\mathbb{R}}
\newcommand{\C}{\mathbb{C}}
\newcommand{\g}{\mathfrak{g}}
\newcommand{\h}{\mathfrak{h}}
\newcommand{\m}{\mathfrak{m}}
\newcommand{\gs}{\mathfrak{g}_\sigma}
\newcommand{\mt}{\mathfrak{m}_\Theta}
\newcommand{\ul}{\mathfrak{u}}
\newcommand{\kt}{\mathfrak{k}_\Theta}
\newcommand{\qt}{\mathfrak{q}_\Theta}
\newcommand{\gal}{\mathfrak{g}_\alpha}
\newcommand{\su}{\mathfrak{su}}
\newcommand{\sll}{\mathfrak{sl}}
\newcommand{\Pp}{\mathcal{P}}
\newcommand{\flag}{\mathbb{F}_\Theta}
\newcommand{\Kt}{K_{\Theta}}
\newcommand{\piteta}{\Pi(\Theta)}
\newcommand{\Lamjv}{\Lambda^{\#}}
\newcommand{\lamjv}{\lambda_\sigma^\#}
\newcommand{\sve}{I_{\Lambda}^{\phi}(q)}
\newcommand{\tipoum}{\text{SO}(2(m+k)+1)/(\text{U}(k) \times \text{SO}(2m+1))}
\newcommand{\tipodois}{\text{Sp}(m+k)/(\text{U}(m) \times \text{Sp}(k))}
\newcommand{\ft}{\text{SU}(3)/T^2}
\DeclareMathOperator{\Ad}{Ad}
\numberwithin{equation}{section}
\begin{document}

\title{On the stability of harmonic maps under the homogeneous Ricci flow}

\author{Rafaela F. do Prado and Lino Grama}
\address{Department of Mathematics - IMECC, University of Campinas - Brazil}
\email{linograma@gmail.com, rafaelafprado@gmail.com}
\thanks{LG is supported by Fapesp grant no. 2014/17337-0 and 2012/18780-0. RP is
supported by CNPq grant 142259/2015-2.}
\begin{abstract}
In this work we study properties of stability and non-stability of harmonic maps under the homogeneous Ricci flow. We provide examples where the stability (non-stability) is preserved under the Ricci flow and an example where the Ricci flow does not preserve the stability of an harmonic map.
\end{abstract}

\maketitle

\section{Introduction}

Let $(M,g)$ be a Riemannian manifold. The {\em Ricci flow} is a 1-parameter family of metrics $g(t)$ in $M$ with initial metric $g$ that satisfies the Ricci flow equation 

\begin{equation}
\label{RicciEquation}
\dfrac{\partial}{\partial t}g(t) = -2Ric(g(t)).
\end{equation}
The Ricci flow was first introduced by Hamilton based on the work of Eells-Sampson as pointed out by him in \cite{Ha82}. One of the main ideas is to start with any metric $g$ of strictly positive Ricci curvature and try to improve it by means of a heat equation. Similar methods were used by Eells-Sampson in the context of harmonic maps (see \cite{E-S}).
In the same work \cite{Ha82} Hamilton showed that positive Ricci curvature is preserved by (\ref{RicciEquation}) on closed 3-manifolds. Hamilton also proved that the same results hold for positive isotropic curvature in closed 4-manifolds \cite{Ha97}. However, some curvatures conditions may not be preserved by the Ricci flow. For example, B\"ohm and Wilking \cite{B-H} exhibited homogeneous metrics with sec $>0$ that develop mixed Ricci curvature in dimension 12, and mixed sectional curvature in dimension 6.  Abiev and Nikonorov  \cite{AN} proved that, for all Wallach spaces, the normalized Ricci flow evolves all generic invariant Riemannian metrics with positive sectional curvature into metrics with mixed sectional curvature and, more recently,  Bettiol and Krishnan \cite{BK} exhibit examples of closed 4-manifolds with nonnegative sectional curvature that develop mixed curvature under Ricci flow. There are several recent papers about Ricci flow (and other geometric flows) in homogeneous spaces, for example, \cite{AN}, \cite{GrMa09}, \cite{GrMa12}, \cite{lauret2}, \cite{lauret3} and references therein.


In this paper, we are interested in studying the stability of harmonic maps under the homogeneous Ricci flow. More specific, we want to know if the Ricci flow preserves stability of a class of harmonic maps from Riemann surfaces to homogeneous spaces. Since harmonic maps are critical points of the energy functional, we are interested in whether the second variation of the energy of these maps are positive or non-negative a certain variation. In this sense, we say that a harmonic map is stable if the second variation of the energy of this map is non-negative for every variation.

The harmonic maps we are going to consider are the so called generalized holomorhic-horizontal. They were first introduced by Bryant \cite{Bry85}. These maps are {\em equiharmonic}, that is, harmonic with respect to any invariant metric. Equiharmonic maps were introduced by Negreiros in \cite{caio} and several results about stability and non-stability of those kind of maps were proved in \cite{NeGrSM11}.

In \cite{GrMa12} and \cite{GrMa09} we have a study on the behavior of the homogeneous Ricci flow of left-invariant metrics on three types of homogeneous manifolds
\begin{equation} \label{t1}
\tipoum , 
\end{equation}
\begin{equation} \label{t2}
\tipodois ,
\end{equation}
 and 
\begin{equation} \label{t3}
 SU(3)/T^2
\end{equation}
 by a dynamical system point of view. We are interested in studying stability and non-stability of generalized holomorphic-horizontal maps in these three classes of homogeneous manifolds.

The homogeneous spaces described in (\ref{t1}), (\ref{t2}) and (\ref{t3}) belongs to a large class of homogeneous spaces called {\em generalized flag manifolds} and these spaces appear in several well known situations. For example: the family (\ref{t1}) includes the non-symmetric complex homogeneous space $\mathbb{CP}^{2n+1}=Sp(n+1)/(Sp(n)\times U(1))$ - the total space of a twistor fibration over $\mathbb{HP}^n$; the family (\ref{t2}) includes the Calabi twistor space $SO(2n+1)/U(n)$ used in the construction of harmonic maps from $S^2$ to $S^{2n}$; and the Wallach flag manifold $SU(3)/T^2$ is a 6-dimensional homogeneous space that admits invariant metric with positive sectional curvature.

By analyzing the dynamics of the homogeneous Ricci flow together with the results concerning stability/unstability of equiharmonic maps, we prove the following results:
\\
\\
\textbf{Theorem A} \textit{The homogeneous Ricci flow preserves the stability (respectively non-stability) of a generalized holomorphic-horizontal map on the homogeneous spaces $\tipoum$ and $\tipodois$.
}
\\
\\
\textbf{Theorem B} \textit{The homogeneous Ricci flow does not preserve stability of a generalized holomorphic-horizontal map on the homogeneous space $\ft$}.
\\

This paper is organized as follows. In section 1, we recall the main results about the geometry of flag manifolds. In section 2, we review some of the theory of holomorphic  maps on flag manifolds, including the results on whether a generalized holomorphic-horizontal map is stable or unstable. In section 3, we first recall the homogeneous Ricci flow of invariant metric on $\tipoum$, $\tipodois$ and $\ft$ and then prove our results.

\section{The geometry of generalized flag manifolds}
\subsection{Generalized flag manifolds}

Let $\mathfrak{g}$ be a complex semisimple Lie algebra and $G$ the correspondent Lie group. Let $\mathfrak{h}$ be a Cartan subalgebra of $\mathfrak{g}$ and denote by $\Pi $ the set of roots of $\left(\mathfrak{g},\mathfrak{h}\right) $. Then 

\begin{equation*}
\mathfrak{g}=\mathfrak{h}\oplus \sum_{\alpha \in \Pi }\mathfrak{g}_{\alpha },
\end{equation*}
where $\mathfrak{g}_{\alpha }=\{X\in \mathfrak{g};\,\forall H\in \mathfrak{h},\,[H,X]=\alpha (H)X\}$ denotes the root space (complex 1-dimensional).

Let $(\cdot ,\cdot) $ be the Cartan-Killing form of the Lie algebra $\mathfrak{g}$ and fix a Weyl basis of $\mathfrak{g}$, that is, choose vectors $X_{\alpha }\in \mathfrak{g}_{\alpha }$ such that $(X_{\alpha },X_{-\alpha }) =1$, $[X_{\alpha },X_{\beta }] = m_{\alpha,\beta }X_{\alpha +\beta }$, where $m_{\alpha ,\beta }\in \mathbb{R}$ satisfying $m_{-\alpha ,-\beta }=-m_{\alpha ,\beta }$ and $m_{\alpha ,\beta }=0$ if $\alpha +\beta \notin \Pi$ (see Helgason \cite{hel} for details).

Given $\alpha \in \mathfrak{h}^{*}$, define $H_{\alpha }$ by $\alpha (\cdot )=( H_{\alpha },\cdot )$ (remember the Cartan-Killing form is nondegenerate on $\mathfrak{h}$) and denote $\mathfrak{h}_{\R}$ the real subspace generated by $H_{\alpha }$, $\alpha \in \Pi $. In the same way, $\mathfrak{h}_{\mathbb{R}}^{*}$ denotes the  real subspace of the dual $\mathfrak{g}^{*}$ generated by the roots.

Denote by $\Pi ^{+}$ the set of positive roots and $\Sigma $ set of simple roots. If $\Theta $ is a subset of simple roots, denote by $\langle
\Theta \rangle $ the set of roots generated by $\Theta $ and $\langle
\Theta \rangle ^{\pm }=\langle \Theta \rangle \cap \Pi ^{\pm }$. Therefore, 
\begin{equation*}
\mathfrak{g}=\mathfrak{h}\oplus \sum_{\alpha \in {\langle \Theta \rangle }%
^{+}}\mathfrak{g}_{\alpha }\oplus \sum_{\alpha \in {\langle \Theta \rangle }%
^{+}}\mathfrak{g}_{-\alpha }\oplus \sum_{\beta \in \Pi ^{+}\setminus {%
\langle \Theta \rangle }}\mathfrak{g}_{\beta }\oplus \sum_{\beta \in \Pi
^{+}\setminus {\langle \Theta \rangle }}\mathfrak{g}_{-\beta }.
\end{equation*}

The parabolic sub-algebra determined by $\Theta $ is given by 

\begin{equation*}
\mathfrak{p}_{\Theta }=\mathfrak{h}\oplus \sum_{\alpha \in {\langle \Theta
\rangle }^{-}}\mathfrak{g}_{\alpha }\oplus \sum_{\alpha \in {\Pi }^{+}}%
\mathfrak{g}_{\alpha }
\end{equation*}

Define  
\begin{equation*}
\mathfrak{q}_{\Theta }=\sum_{\beta \in \Pi ^{+}\setminus {\langle \Theta
\rangle }}\mathfrak{g}_{-\beta },
\end{equation*}
and therefore $\mathfrak{g}=\mathfrak{q}_{\Theta }\oplus \mathfrak{p}_{\Theta }$.

The {\em generalized flag manifold} $\mathbb{F}_{\Theta }$ (associated to $%
\mathfrak{p}_{\Theta }$) is the homogeneous space 
\begin{equation*}
\flag = \dfrac{G}{P_\Theta},
\end{equation*}
where the subgroup $P_{\Theta }$ is the normalizer of $\mathfrak{p}_{\Theta }$ in $G$. 

Recall the compact real form of $\mathfrak{g}$ is the real subalgebra given by 
\begin{equation*}
\mathfrak{u}=\mathrm{span}_{\mathbb{R}}\{i\mathfrak{h}_{\mathbb{R}%
},A_{\alpha },iS_{\alpha }:\alpha \in \Pi \},
\end{equation*}
where $A_{\alpha }=X_{\alpha }-X_{-\alpha }$ and $S_{\alpha }=X_{\alpha
}+X_{-\alpha }$.


Let $U=\exp \mathfrak{u}$ be the corresponding compact
real form of $G$ and put $K_{\Theta }=P_{\Theta }\cap U$. The Lie group $U$ acts transitively on the generalized flag manifold $\mathbb{F}_{\Theta }$ with isotropy subgroup $K_{\Theta }$. Therefore we have also $\mathbb{F}_{\Theta }=U/K_{\Theta }$


Let $\mathfrak{k}_{\Theta }$ be the Lie algebra of $K_{\Theta }$ and denote by $\mathfrak{k}_{\Theta }^{\mathbb{C}}$ its complexification. Thus, $\mathfrak{k}_{\Theta }=\mathfrak{u}\cap \mathfrak{p}_{\Theta }$ and 

\begin{equation*}
\mathfrak{k}_{\Theta }^{\mathbb{C}}=\mathfrak{h}\oplus \sum_{\alpha \in
\langle \Theta \rangle }\mathfrak{g}_{\alpha }. 
\end{equation*}

Let $o=eK_{\Theta}$ be the origin (trivial coset) of $\mathbb{F}_{\Theta }$. Then the tangent
space $T_{o}\mathbb{F}_{\Theta }$ identifies with the orthogonal
complement of $\mathfrak{k}_{\Theta }$ in $\mathfrak{u}$, that is, 
\begin{equation*}
T_{o}\mathbb{F}_{\Theta }= \mathfrak{m}_{\Theta }=\mathrm{span}_{\mathbb{R}%
}\{A_{\alpha },iS_{\alpha }:\alpha \notin \langle \Theta \rangle
\}=\sum_{\alpha \in \Pi \setminus \langle \Theta \rangle }\mathfrak{u}%
_{\alpha }, 
\end{equation*}
where $\mathfrak{u}_{\alpha }=\left( \mathfrak{g}_{\alpha }\oplus \mathfrak{g%
}_{-\alpha }\right) \cap \mathfrak{u}=\mathrm{span}_{\mathbb{R}}\{A_{\alpha
},iS_{\alpha }\}$.
By complexifying $\mathfrak{m}_{\Theta }$, we obtain the
complex tangent space of $T_{o}^{\mathbb{C}}\mathbb{F}_{\Theta }$, which can be identified with 
\begin{equation*}
\mathfrak{m}_{\Theta}^{\mathbb{C}}=\mathfrak{q}_{\Theta }=\sum_{\beta \in
\Pi \setminus \langle \Theta \rangle }\mathfrak{g}_{\beta }. 
\end{equation*}

\subsection{Almost complex structures}

An $\Kt$-invariant almost complex structure $J$ on $\flag$ is completely determined by its value at the origin, that is, by $J: \mt \to \mt$ in the tangent space of $\flag$ at the origin. The map $J$ satisfies $J^2=-1$ and commutes with the adjoint action of $\Kt$ on $\mt$. We also denote by $J$ its complexification to $\qt$.

The invariance of $J$ entails that $J(g_{\sigma}) = g_{\sigma}$ for all $\sigma \in \piteta$. The eigenvalues of $J$ are $\pm \sqrt{-1}$ and the eigenvectors in $\qt$ are $X_{\alpha}$, $\alpha \in \Pi\setminus\langle \Theta \rangle$. Hence, in each irreducible component, $J= \sqrt{-1}\epsilon_\sigma Id$, where $\epsilon_\sigma = \pm 1$ and $\epsilon_{-\sigma} = -\epsilon_{\sigma}$.
An $U$-invariant structure on $\flag$ is then completely described by a set of signs $\{\epsilon_{\sigma}\}_{\sigma \in \piteta}$ with $\epsilon_{-\sigma} = -\epsilon_{\sigma}$.

The eigenvectors associated to $\sqrt{-1}$ are of type $(1,0)$ while the eigenvectors associated to $-\sqrt{-1}$ are of type $(0,1)$. Thus, the $(1,0)$ vectors at the origin are multiples of $X_{\alpha}$, $\epsilon_{\sigma} = 1$, and the $(0,1)$ vectors are also multiples of $X_\alpha$, $\epsilon_\sigma = -1$, where $\alpha \in \sigma$. Also,

\begin{equation}
\label{et01}
T_x\flag^{(1,0)} = \sum_{\sigma \in \piteta^+}E_{\epsilon_\sigma \sigma} \ \ \ \ \ \ T_x\flag^{0,1} = \sum_{\sigma \in \piteta^+} E_{\epsilon_{-\sigma}\sigma}.
\end{equation}

Since $\flag$ is a homogeneous space of a complex Lie group, it has a natural structure of a complex manifold. The associated integrable almost complex structure $J_c$ is given by $\epsilon_\sigma = 1$ if the roots in $\sigma$ are all negative. The conjugate structure $-J_c$ is also integrable.  

\subsection{Isotropy representation}

The adjoint representations of $\kt$ and $\Kt$ leave $\mt$ invariant, so that we get a well-defined representation of both $\kt$ and $\Kt$ in $\mt$. Analogously, the complex tangent space $\qt$ is invariant under the adjoint representation of $\kt^\C$ and we can define the complexification of the isotropy representation from $\kt^\C$ to Aut($\mt^\C$). Since the representation is semissimple, we can decompose it into irreducible components, where each irreducible component is a sum of root spaces.

We will denote an irreducible component of $\mt^{\C} = \qt$ by $\gs$, where $\sigma$ is the subset of roots such that
$$\gs = \sum_{\alpha \in \sigma} \gal,$$
and we write $\piteta$ for the set of $\sigma$'s. Then, we have
$$\qt = \sum_{\sigma \in \piteta} \gs.$$

The roots in each irreducible component $\sigma \in \piteta$ are either all positive or all negative, so we write $\piteta^+$ and $\piteta^-$ for the set of those irreducible components containing only positive roots and negative roots, respectively.

Denote by $\Sigma(\Theta)$ the set of $\sigma \in \piteta$ that has height $1$ module $\langle \Theta \rangle$, i.e,

$$\Sigma(\Theta) = \{\sigma \in \piteta: \text{ the height of } \sigma \in \piteta \text{ is } 1\}.$$

Since $\Ad(k)(\gs) = \gs$, for each $\sigma \in \piteta$, we have a well defined complex plane field on $\flag$ given by
$$E_\sigma (k \cdot o) = k_*(\gs)$$
and, for any $x \in \flag$, we have
$$T_x^{\C}\flag = \sum_{\sigma \in \piteta}E_\sigma(x).$$

\subsection{Invarian metrics}
There is a 1-1 correspondence between $U$-invariant metrics $g$ on $\flag$ and $\Ad(\Kt)$-invariant scalar products $B$ on $\mt$ (see for instance \cite{KoNo}). Any $B$ can be written as $$B(X,Y) = \langle X,Y \rangle_\Lambda =  -(\Lambda X,Y),$$ with $X,Y \in \mt$, where $\Lambda$ is an $\Ad(\Kt)$-invariant positive symmetric operator on $\mt$ with respect to the Cartan-Killing form. The scalar product $B(.,.) = \langle X,Y \rangle_\Lambda$ admits a natural extension to a symmetric bilinear form on $\mt^\C = \qt$. We will use the same notation for this extension.

As a consequence of Schur's Lemma, in each irreducible component of $\qt$ we have $\Lambda = \lambda_\sigma$, with $\lambda_{-\sigma} = \lambda_\sigma > 0$. 

\begin{remark}
In the next sections we abuse notation and denote an invariant metric $g$ on $\flag$ just by $\Lambda= (\lambda_\sigma)_{\sigma \in \piteta}$, that is, a $n$-uple of positive real numbers indexed by the irreducible components of $\mathfrak{m}_\Theta^\C = \qt$.
\end{remark}

\section{Stability of equiharmonic maps on generalized flag manifolds}
\subsection{$J$-holomorphic curves on $\flag$}
If $M = M^2$ is a Riemann surface and $\phi: M \to \flag$ is a differentiable map, we let $d^{\C}\phi$ be the complexification of the differential of $\phi$. We endow $\flag$ with a complex structure $J$ and, as usual, decompose $d^{\C}\phi$ into $\frac{\partial \phi}{\partial z}(p): TM^{(1,0)} \to T\flag^{(1,0)}$ and $\frac{\partial \phi}{\partial \overline{z}}: TM^{(0,1)} \to T\flag^{(0,1)}$, which are identified with vectors in the complex tangent space. We use the decomposition of $T^{\C}\flag$ into irreducible components. By $(\ref{et01})$, we have
\begin{equation*}
\dfrac{\partial \phi}{\partial z}(p) = \sum_{\sigma \in \piteta^+}\phi_{\epsilon_\sigma\sigma}(p) \ \ \ \ \ \ \dfrac{\partial \phi}{\partial \overline{z}}(p) = \sum_{\sigma \in \piteta^+}\phi_{\epsilon_{-\sigma}\sigma}(p),
\end{equation*}
where, for each $\sigma \in \piteta$, the function $\phi_\sigma: M^2 \to E_\sigma$ takes value in $E_\sigma(\phi(p))$, $p \in M$.

Given an almost complex structure on $\flag$, a map $\phi: M^2 \to \flag$ is $J$-holomorphic if, for all $p \in M$, holds
\begin{equation*}
\dfrac{\partial \phi}{\partial \overline{z}}(p) = \sum_{\sigma \in \piteta^+}\phi_{\epsilon_{-\sigma}\sigma}(p) = 0.
\end{equation*}

\subsection{Stability of equiharmonic maps on $\flag$.}

In \cite{NeGrSM11} were proved  several results about stability and non-stability of equiharmonic maps (maps that are harmonic with respect to any invariant metric) in a generalized flag manifold $\flag$. 

Let us recall some of the results in \cite{NeGrSM11}. Consider $M^2$ a compact Riemann surface equipped with a metric $g$, let $(N,h)$ be a compact Riemannian manifold and $\phi:(M^2,g)\to (N,h)$ a differentiable map. The {\em energy} of $\phi$ is given by 
$$
E(\phi)=\frac{1}{2}\int_{M}{|d\phi|^2\, \omega_g},
$$ 
where $\omega_g$ is the volume measure defined by the metric $g$ and $|d\phi|$ is the Hilbert-Schmidt norm of $d\phi$. The differentiable map $\phi$ is {\em harmonic} if it is a critical point of the energy functional. 

Let us restrict ourselves to harmonic maps from compact Riemann surfaces to a generalized flag manifold $\flag$. Given a harmonic map $\phi: (M^2,g) \to (\flag, ds^2_\Lambda)$, we consider perturbed maps $\phi^t(p)$ given by 
$$\phi^t(p) = \exp(tq(p)) \cdot \phi(p),$$where $q: M \to \ul$ is a smooth map. The {\em second variation of the energy of} $\phi$, denoted by $\sve$, is given by
$$
I_{\Lambda }^{\phi }(q)=\left. \frac{d^{2}}{dt^{2}}\right|
_{t=0}\!\!\!\!\!E(\phi ^{t}).
$$


\begin{definition}
Let $\phi: (M^2,g) \to (\flag, ds^2_\Lambda)$ be an arbitrary harmonic map. We say that $\phi$ is {\em stable} if $I_{\Lambda}^\phi(q) \geq 0$ for any variation $q: M^2 \to \g$. Otherwise, we say that $\phi$ is {\em unstable}.
\end{definition}

We are interested in the following situation: let $\Lambda_0$ be a invariant metric on $\flag$ and $\Lambda_1$ another invariant metric obtained from $\Lambda_0$ by a special kind of perturbation (defined bellow). Suppose that the map $\phi:M^2\to \flag$ is harmonic with respect to {\em both} invariant metrics $\Lambda_0$ and $\Lambda_1$. One of the main contribution of \cite{NeGrSM11} is provide the understanding of the behavior of $I^\phi_{\Lambda_1}$ in therms of $I^\phi_{\Lambda_0}$.

\begin{definition} Let $\Pp$ be a subset of $\piteta$. An invariant metric $\Lambda^\# = (\lambda_\sigma^\#)_{\sigma \in \piteta}$ is called a {\em $\Pp$-perturbation of $\Lambda$} if the following holds:
\begin{enumerate}
\item $\lambda_\sigma^\# = \lambda_\sigma$ for all $\sigma \in \Pp$;
\item $\lambda_\sigma^\# = \lambda_\sigma + \xi_\sigma > 0$, $\xi_\sigma \in \R$,  for all $\sigma \in \piteta\setminus\Pp$. 
\end{enumerate}
\end{definition}

Since we need to consider maps that are harmonic with respect to the invariant metric $\Lambda$ and the perturbed metric $\Lamjv$, it is natural to consider equiharmonic maps.

\begin{definition}
A map $\phi:M^2\to \flag$ is called {\em equihamonic map} if it is harmonic with respect {any} invariant metric on $\flag$. 
\end{definition}

Examples of equiharmonic maps are the so called generalized holomorphic-horizontal maps, whose definition is given bellow.


\begin{definition}
A map $\phi: M^2 \to (\flag, J)$ is called generalized holomorphic-horizontal if it is $J$-holomorphic and satisfies $\phi_\sigma = 0$ if $\sigma \in \piteta \setminus \Sigma(\Theta)$.
\end{definition}

\begin{remark}
Here we are using the same terminology of Bryant \cite{Bry85}. In \cite{BR90}, these maps are called \textit{super-horizontal maps}.
\end{remark}

Observe that, when working with generalized holomorphic-horizontal maps, we are taking $\Pp = \Sigma(\Theta)$. Also, the following result guarantees that those maps are equiharmonic maps. 

\begin{theorem} \cite{GNSM14}
If $\phi$ is a generalized holomorphic-horizontal map, then $\phi$ is equiharmonic.
\end{theorem}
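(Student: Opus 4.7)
The plan is to show directly that the tension field of $\phi$ vanishes for \emph{every} invariant metric $\Lambda$ on $\flag$, so that $\phi$ is in fact harmonic with respect to all such metrics. In a local complex coordinate $z$ on $M^2$, the harmonic-map equation for $\phi:(M^2,g)\to(\flag,\Lambda)$ takes the form
\[
\tau_\Lambda(\phi)\;=\;\frac{\partial^{2}\phi}{\partial z\,\partial\bar z}\;+\;\bigl(\nabla^{\Lambda}\text{-Christoffel terms}\bigr)\bigl(\tfrac{\partial\phi}{\partial z},\tfrac{\partial\phi}{\partial \bar z}\bigr)\;=\;0,
\]
where the quadratic piece is the contraction of $\partial\phi/\partial z$ and $\partial\phi/\partial\bar z$ with the Christoffel symbols of the $\Lambda$-Levi-Civita connection on $\flag$. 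By Koszul's formula, these symbols are polynomials in the Lie brackets $[X_\alpha,X_{\pm\beta}]$ on $\mathfrak{m}_\Theta$, with $\Lambda$-dependence entering only through the eigenvalues $(\lambda_\sigma)$.

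Next I would exploit $J$-holomorphicity. Writing $\partial\phi/\partial z=\sum_{\sigma\in\piteta^{+}}\phi_{\epsilon_\sigma\sigma}$ and $\partial\phi/\partial\bar z=\overline{\partial\phi/\partial z}$, the quadratic piece of $\tau_\Lambda(\phi)$ reduces to a sum of terms of the shape $\phi_{\epsilon_\sigma\sigma}\,\overline{\phi_{\epsilon_{\sigma'}\sigma'}}$ multiplied by coefficients built from $[X_\alpha,X_{-\beta}]$ for $\alpha\in\sigma$, $\beta\in\sigma'$. No $(1,0)$-$(1,0)$ or $(0,1)$-$(0,1)$ bracket can appear, because one factor comes from $\partial\phi/\partial z$ and the other from $\partial\phi/\partial\bar z$.

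Now I would invoke horizontality: $\phi_\sigma=0$ unless $\sigma\in\Sigma(\Theta)$, i.e. unless $\sigma$ has height one modulo $\langle\Theta\rangle$. For $\alpha,\beta$ both of height one, the root $\alpha-\beta$ has height zero, hence $\alpha-\beta\in\langle\Theta\rangle\cup\{0\}$. Consequently
\[
[X_\alpha,X_{-\beta}]\;\in\;\mathfrak{h}\oplus\sum_{\gamma\in\langle\Theta\rangle}\mathfrak{g}_{\gamma}\;=\;\mathfrak{k}_\Theta^{\mathbb{C}}.
\]
Since the tension field lives in $\mathfrak{m}_\Theta^{\mathbb{C}}=\mathfrak{q}_\Theta$, the projection onto $\mathfrak{m}_\Theta^{\mathbb{C}}$ kills every such contribution. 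Therefore every $\Lambda$-dependent term in $\tau_\Lambda(\phi)$ vanishes identically, leaving only $\partial^{2}\phi/\partial z\,\partial\bar z=\partial/\partial z\bigl(\partial\phi/\partial\bar z\bigr)$, which is zero by $J$-holomorphicity. Thus $\tau_\Lambda(\phi)=0$ for every invariant $\Lambda$, proving equiharmonicity.

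The main obstacle I anticipate is the bookkeeping in the second step: one has to write out carefully which combinations $[X_\alpha,X_{-\beta}]$ genuinely appear in the Koszul formula for an arbitrary invariant metric, keep track of the $\lambda_\sigma$ coefficients, and verify that \emph{all} of them are paired with one factor of $(1,0)$-type and one of $(0,1)$-type, so that horizontality plus the height-one bracket computation can be applied uniformly. Once this structural step is in place, the conclusion is automatic and, crucially, independent of the choice of $\Lambda$.
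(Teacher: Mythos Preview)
The paper does not prove this theorem; it merely quotes it from \cite{GNSM14}, so there is no ``paper's own proof'' to compare against. Your overall strategy---compute the tension field for an arbitrary invariant metric $\Lambda$, observe that the $\Lambda$-dependent terms are built from brackets $[X_\alpha,X_{-\beta}]$ with $\alpha,\beta$ of height one modulo $\langle\Theta\rangle$, and conclude these land in $\mathfrak{k}_\Theta^{\mathbb{C}}$---is exactly the mechanism used in the cited reference, and the height computation $\alpha-\beta\in\langle\Theta\rangle\cup\{0\}$ is the heart of the matter.

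There is, however, a genuine gap in your final step. You write that after the Christoffel terms are killed, ``only $\partial^{2}\phi/\partial z\,\partial\bar z$ remains, which is zero by $J$-holomorphicity''. This is not correct: $J$-holomorphicity says that $\partial\phi/\partial\bar z$ lies in $T^{0,1}\flag$, \emph{not} that it vanishes (if it vanished, $\phi$ would be constant). The coordinate expression $\partial^{2}\phi/\partial z\,\partial\bar z$ has no reason to be zero. What your bracket argument actually proves is that $\tau_\Lambda(\phi)$ is \emph{independent of} $\Lambda$; to finish, you must show that $\tau_{\Lambda_0}(\phi)=0$ for \emph{some} invariant metric $\Lambda_0$. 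The cleanest route is to work with a lift $F:M\to U$ and the Maurer--Cartan form $\alpha=F^{-1}dF$: the tension field for the normal (Killing) metric is governed by the canonical connection, and one checks directly (again using horizontality together with the Maurer--Cartan equation) that it vanishes. Alternatively, invoke the fact that a holomorphic--horizontal curve is $J_c$-holomorphic for the integrable structure $J_c$, hence harmonic for any invariant K\"ahler metric by Theorem~\ref{kahler}; combined with your $\Lambda$-independence this gives equiharmonicity. A secondary point worth tightening: your claim that $\alpha-\beta$ has height zero tacitly assumes $\alpha$ and $\beta$ have the \emph{same} height sign; you should make explicit that horizontality plus $J$-holomorphicity forces the $(1,0)$-components of $d\phi$ to sit in height $+1$ root spaces (and the $(0,1)$-components in height $-1$), which is where the choice of $J$ enters.
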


%

The following theorems are crucial in the understanding of the stability of generalized holomorphic maps under the Ricci flow of a perturbed invariant metric. We start with a classical result of Lichnerowicz:

\begin{theorem} \cite{Lich70}
\label{kahler}
Let $\phi: (M^2, J, g) \to (\flag, J, ds_{\Lambda}^2)$ be a $J$-holomorphic map, where $(\flag, J, ds_{\Lambda}^2)$ is a K\"{a}hler structure. Then, $\phi$ is harmonic and stable.
\end{theorem}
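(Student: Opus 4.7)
The plan is to exploit the fact that when the target is K\"{a}hler, the energy of a map from a Riemann surface decomposes as a non-negative piece plus a topological invariant; a $J$-holomorphic map then automatically minimizes the energy in its homotopy class and hence is both harmonic and stable.

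Concretely, I would first pick a local holomorphic coordinate $z$ on $M^{2}$ compatible with $g$ and decompose $d^{\C}\phi = \partial\phi + \bar\partial\phi$ via the $(1,0)/(0,1)$-splitting of $T^{\C}\flag$ induced by $J$. Taking the Hilbert--Schmidt norm relative to $g$ and $ds^{2}_{\Lambda}$ and integrating against the area form $\omega_{g}$ yields
\begin{equation*}
E(\phi) = \int_{M}|\partial\phi|^{2}\,\omega_{g} + \int_{M}|\bar\partial\phi|^{2}\,\omega_{g}.
\end{equation*}
Writing $\omega_{\Lambda}$ for the K\"{a}hler form of $(\flag,J,ds^{2}_{\Lambda})$, a short calculation in holomorphic coordinates gives the pointwise identity
\begin{equation*}
\phi^{*}\omega_{\Lambda} = \bigl(|\partial\phi|^{2} - |\bar\partial\phi|^{2}\bigr)\omega_{g},
\end{equation*}
so combining the two expressions produces $E(\phi) = 2\int_{M}|\bar\partial\phi|^{2}\,\omega_{g} + \int_{M}\phi^{*}\omega_{\Lambda}$.

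The key structural point is that $\omega_{\Lambda}$ is closed, hence by Stokes' theorem $\int_{M}\phi^{*}\omega_{\Lambda}$ depends only on the homotopy class $[\phi]$. Therefore $E(\phi)$ equals the non-negative functional $2\int|\bar\partial\phi|^{2}\,\omega_{g}$ plus a homotopy constant, and is minimized in its class precisely when $\bar\partial\phi = 0$. Since $\phi$ is $J$-holomorphic by hypothesis, it attains this minimum. For any variation $\phi^{t} = \exp(tq)\cdot\phi$ with $q\colon M^{2}\to\ul$, the maps $\phi^{t}$ remain homotopic to $\phi$, so $E(\phi^{t}) \geq E(\phi)$ for all small $t$. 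Differentiating at $t=0$ then gives first that the first variation of the energy vanishes (so $\phi$ is harmonic) and afterwards that $I_{\Lambda}^{\phi}(q) = \left.\frac{d^{2}}{dt^{2}}\right|_{t=0}E(\phi^{t}) \geq 0$, which is exactly the definition of stability.

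The only step that is not essentially formal is the pointwise identity $\phi^{*}\omega_{\Lambda} = (|\partial\phi|^{2} - |\bar\partial\phi|^{2})\omega_{g}$; I expect this to be the main technical obstacle, but it reduces to a standard computation in a unitary holomorphic frame for $T\flag$ using the compatibility of $J$ with $ds^{2}_{\Lambda}$. Closedness of $\omega_{\Lambda}$ --- the K\"{a}hler hypothesis --- is what converts $\int\phi^{*}\omega_{\Lambda}$ into a topological invariant, and is therefore indispensable to the argument.
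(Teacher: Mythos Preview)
Your argument is correct and is precisely the classical proof due to Lichnerowicz: the energy identity $E(\phi)=2\int_M|\bar\partial\phi|^2\,\omega_g+\int_M\phi^*\omega_\Lambda$ together with closedness of $\omega_\Lambda$ shows that a $J$-holomorphic map is an absolute energy minimizer in its homotopy class, whence harmonicity and stability follow immediately. Note that the paper does not supply its own proof of this statement; it is quoted as a known result with a reference to \cite{Lich70}, so there is no alternative approach to compare against --- your write-up simply reconstructs the cited argument.
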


Now we consider some special $\mathcal{P}$-perturbation of a invariant K\"ahler structure on $\flag$ in order to construct several examples of stable/unstable harmonic maps.

\begin{theorem} \cite{NeGrSM11}
\label{kahlersoma}
Let $\phi: M^2 \to \flag$ be a generalized holomorphic-horizontal map and $(\flag, J, ds_{\Lambda}^2)$ a K\"{a}hler structure. Consider a $\Pp$-perturbation $\Lamjv$ of this structure, with $\xi_\sigma \geq 0$ for all $\sigma \in \piteta \setminus \Pp$. Then $\phi: (M^2,g) \to (\flag, ds_{\Lamjv}^2)$ is stable.
\end{theorem}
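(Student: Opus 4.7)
The natural plan is to reduce stability under $\Lambda^\#$ to stability under the Kähler metric $\Lambda$ (given by Theorem \ref{kahler}) plus a non-negative correction coming from the perturbation. More precisely, the goal is to establish an identity of the shape
\begin{equation*}
I^{\phi}_{\Lambda^\#}(q) \;=\; I^{\phi}_{\Lambda}(q) \;+\; \sum_{\sigma \in \piteta \setminus \Pp} \xi_\sigma \, Q_\sigma(\phi, q),
\end{equation*}
where each $Q_\sigma(\phi,q)$ is a non-negative functional of the variation $q$ and the map $\phi$; combined with $\xi_\sigma \geq 0$ and Lichnerowicz's theorem, this would immediately give $I^{\phi}_{\Lambda^\#}(q) \geq 0$.

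The first step is to justify that $\phi$ is indeed harmonic with respect to $\Lambda^\#$ so that the second variation is well defined: this follows from the preceding theorem (generalized holomorphic-horizontal maps are equiharmonic). Next, I would expand the second variation formula from \cite{NeGrSM11} expressed in terms of the decomposition $\mathfrak{g} = \kt^{\C} \oplus \qt = \kt^{\C} \oplus \sum_{\sigma \in \piteta} \mathfrak{g}_\sigma$, writing the variation as $q = \sum_\sigma q_\sigma$ and the differential as $\frac{\partial \phi}{\partial z} = \sum_{\sigma \in \piteta^+}\phi_{\epsilon_\sigma \sigma}$. The contribution of the perturbation to $I^\phi_{\Lambda^\#} - I^\phi_\Lambda$ then splits into terms indexed by $\sigma \in \piteta \setminus \Pp$ multiplied by $\xi_\sigma$.

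The key structural observation is that $\Pp = \Sigma(\Theta)$ and the holomorphic-horizontal hypothesis gives $\phi_\sigma \equiv 0$ for every $\sigma \notin \Sigma(\Theta)$. This should force all the potentially-sign-changing cross terms in the $\sigma$-correction (the ones involving $\phi_\sigma$ with $\sigma \notin \Pp$) to vanish, leaving only terms that are manifestly non-negative, typically norms squared of components of $\frac{\partial q}{\partial \overline{z}}$ or of $\phi_\tau$ for $\tau \in \Pp$ paired with $|q_\sigma|^2$. In other words, the horizontality of $\phi$ combined with $\xi_\sigma \ge 0$ makes the perturbation purely stabilizing. Since the unperturbed Kähler piece $I^\phi_\Lambda(q)$ is $\geq 0$ by Theorem \ref{kahler}, the sum is $\geq 0$, which is exactly stability of $\phi$ under $\Lambda^\#$.

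The main obstacle I expect is bookkeeping: carefully extracting from the second-variation formula of \cite{NeGrSM11} the precise dependence on each $\lambda_\sigma$, and verifying that every cross term which could contribute a negative sign is killed by $\phi_\sigma = 0$ for $\sigma \notin \Pp$. Once that cancellation is verified, collecting the surviving terms into a manifestly non-negative sum is routine, and the conclusion follows by invoking Theorem \ref{kahler} and non-negativity of $\xi_\sigma$.
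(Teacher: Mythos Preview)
The paper does not contain a proof of this theorem: it is quoted verbatim from \cite{NeGrSM11} and used as a black box in the subsequent arguments. Consequently there is nothing in the present paper against which to compare your proposal.

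That said, the strategy you outline---writing $I^\phi_{\Lambda^\#}(q) = I^\phi_\Lambda(q) + \sum_{\sigma \notin \Pp} \xi_\sigma\, Q_\sigma(\phi,q)$, using horizontality ($\phi_\sigma = 0$ for $\sigma \notin \Sigma(\Theta)$) to kill the sign-indefinite cross terms, and then invoking Lichnerowicz for the K\"ahler piece---is exactly the structure of the argument in \cite{NeGrSM11}. Your own caveat is accurate: the substance of the proof is the bookkeeping step, namely isolating the $\lambda_\sigma$-dependence in the second-variation formula and checking term by term that every contribution carrying a factor $\phi_\sigma$ with $\sigma \notin \Pp$ really does vanish. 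What you have written is a correct high-level plan but not yet a proof; to complete it you would need to reproduce (or cite) the explicit second-variation formula from \cite{NeGrSM11} and carry out that verification.
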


\begin{theorem} \cite{NeGrSM11}
\label{kahlersubtracao}
Let $\phi: M^2 \to \flag$ be a generalized holomorphic-horizontal map and take a K\"{a}hler structure $(\flag, J, ds_{\Lambda}^2)$ with $\Lambda = (\lambda_\sigma)_{\sigma \in \piteta}$. Suppose that $\Lamjv = (\lamjv)_{\sigma \in \piteta}$ is another metric such that, for some $\sigma_0 \in \piteta \setminus \Sigma(\Theta)$, the inequality $\xi_{\sigma_0} = \lambda_{\sigma_0}^\# - \lambda_{\sigma_0} < 0$ holds. Then, $\phi$ is unstable with respect to $\Lamjv$. 
\end{theorem}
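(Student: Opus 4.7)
My plan is to produce an explicit variation $q : M^2 \to \g$ witnessing $I^\phi_{\Lamjv}(q) < 0$. The first ingredient is the comparison formula underlying the proof of Theorem~\ref{kahlersoma}: since $\phi$ is generalized holomorphic-horizontal it is equiharmonic, and hence harmonic for both $\Lambda$ and $\Lamjv$, so one can compute both second variations and subtract to obtain
\begin{equation*}
I^\phi_{\Lamjv}(q) \;=\; I^\phi_\Lambda(q) \;+\; \sum_{\sigma \in \piteta \setminus \Sigma(\Theta)} \xi_\sigma\, \Phi_\sigma(q, \phi),
\end{equation*}
where each $\Phi_\sigma(q, \phi)$ is a quadratic form in $q$ depending on the components $\phi_\tau$ of $d\phi$ and on the $\mathfrak{g}_\sigma \oplus \mathfrak{g}_{-\sigma}$-projection of $q$. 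Theorem~\ref{kahlersoma} combined with Theorem~\ref{kahler} actually forces each $\Phi_\sigma$ to be pointwise non-negative: otherwise allowing all $\xi_\sigma > 0$ would contradict stability in $\Lamjv$.

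Next I would choose a test variation supported entirely in the $\sigma_0$-direction, setting $q = f A_\alpha + g\, iS_\alpha$ for a fixed $\alpha \in \sigma_0$ and smooth real functions $f, g : M^2 \to \R$. Horizontality forces $\phi_\tau = 0$ for every $\tau \notin \Sigma(\Theta)$, and combined with the root-bracket structure this annihilates every summand with $\sigma \neq \sigma_0$, leaving
\begin{equation*}
I^\phi_{\Lamjv}(q) \;=\; I^\phi_\Lambda(q) \;+\; \xi_{\sigma_0}\, \Phi_{\sigma_0}(q, \phi).
\end{equation*}
Since $\xi_{\sigma_0} < 0$ it then suffices to exhibit $(f,g)$ for which $\Phi_{\sigma_0}(q, \phi)$ is strictly positive while the Kähler contribution $I^\phi_\Lambda(q)$ is small relative to $|\xi_{\sigma_0}| \Phi_{\sigma_0}(q, \phi)$. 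This is achieved by a standard bump-and-concentrate argument: take $(f, g)$ essentially constant on a small disc centered at a point $p_0 \in M^2$ with $d\phi(p_0) \neq 0$, so that the Dirichlet part of $I^\phi_\Lambda(q)$ is $O(\varepsilon)$ while the curvature term giving $\Phi_{\sigma_0}$ is bounded below by a strictly positive constant.

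The main obstacle is verifying the strict positivity of $\Phi_{\sigma_0}(q, \phi)$ for this choice of $q$. Since $\phi_{\sigma_0} = 0$ by horizontality, positivity cannot come from the $\sigma_0$-component of $d\phi$ itself; instead one must inspect the root-bracket combinatorics governing $\Phi_{\sigma_0}$. Because $\sigma_0 \notin \Sigma(\Theta)$ has height at least two modulo $\langle \Theta \rangle$, there exists a simple root $\alpha' \in \Sigma(\Theta)$ with $\sigma_0 - \alpha' \in \piteta$, and the non-vanishing structure constant $m_{\alpha',\sigma_0 - \alpha'}$ forces $\Phi_{\sigma_0}(q, \phi)$ to include a contribution proportional to $|\phi_{\sigma_0 - \alpha'}|^2 |q|^2$. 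Horizontality of $\phi$ together with the hypothesis $d\phi \not\equiv 0$ ensures such a component is nonzero somewhere; choosing $p_0$ there completes the argument.
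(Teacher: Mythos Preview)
The paper does not contain its own proof of this theorem; the statement is quoted from \cite{NeGrSM11} and used as a black box in the subsequent Ricci-flow analysis. So there is no in-paper argument to compare against.

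On its own merits, your strategy is the natural one and the comparison formula is sound: since the energy $E(\phi^t)=\tfrac12\int_M |d\phi^t|^2_\Lambda$ is linear in the parameters $\lambda_\sigma$, so is its second $t$-derivative, and the non-negativity of each $\Phi_\sigma$ indeed follows by letting a single $\xi_\sigma\to+\infty$ in Theorem~\ref{kahlersoma}. The difficulty is in the last two paragraphs.

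First, the claim that a variation $q$ with values in $\mathfrak{u}_\alpha$, $\alpha\in\sigma_0$, annihilates every $\Phi_\sigma$ with $\sigma\neq\sigma_0$ is asserted but not argued; the brackets $[q,d\phi]$ generically land in components adjacent to $\sigma_0$, and without the explicit second-variation formula one cannot see why those contributions drop out. Since the theorem places no restriction on the signs of $\xi_\sigma$ for $\sigma\neq\sigma_0$, any surviving $\Phi_\sigma$ could swamp the $\sigma_0$-term.

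Second, and more seriously, your mechanism for making $\Phi_{\sigma_0}(q,\phi)$ strictly positive breaks down. If $\sigma_0$ has height $\geq 3$ modulo $\langle\Theta\rangle$, then $\sigma_0-\alpha'$ has height $\geq 2$ for every $\alpha'\in\Sigma(\Theta)$, so $\phi_{\sigma_0-\alpha'}=0$ by horizontality and the term you isolate vanishes. Even when $\sigma_0$ has height~$2$, nothing in the hypotheses forces the particular component $\phi_{\sigma_0-\alpha'}$ to be nonzero: a generalized holomorphic-horizontal map may be tangent to a single simple-root direction that does not bracket into $\sigma_0$ at all. You would need either a fullness assumption on $\phi$ or a different construction of the destabilising variation that does not rely on pairing $q$ against a specific $\phi_\tau$.
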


\section{Stability on $\flag$ under the Ricci Flow}

In this section we study the stability of a generalized holomorphic-horizontal map $\phi: M^2 \to \flag$ under the homogeneous Ricci flow of a perturbed invariant metric for three types of flag manifolds: $\text{SO}(2n+1)/(\text{U}(k)\times \text{SO}(2n+1))$, $\text{Sp}(n)/(\text{U}(m) \times \text{Sp}(k))$ and $\ft$. For more details about this topic, see \cite{GrMa09} and \cite{GrMa12}.

\subsection{Homogeneous Ricci Flow}

We will begin by reviewing the global behaviour of the homogeneous Ricci flow on $\text{SO}(2n+1)/(\text{U}(k)\times \text{SO}(2n+1))$, $\text{Sp}(n)/(\text{U}(m) \times \text{Sp}(k))$ and $\ft$. 

\subsubsection{Ricci flow on $\tipoum$ and $\tipodois$}

The isotropy representation of the families $\tipoum$ and $\tipodois$ decompose into two non-equivalent isotropy summands, that is, $\mt = \m_1 \oplus \m_2$. We keep our notation and denote an invariant metric just by $\Lambda=(\lambda_1,\lambda_2)$.  The Ricci tensor of an invariant metric $\Lambda$ is again an invariant tensor, and therefore completely determined by its value at the origin of the homogeneous space and constant in each irreducible component of the isotropy representation. In the case of $\text{SO}(2n+1)/(\text{U}(k) \times \text{SO}(2n+1))$ and $\text{Sp}(n)/(\text{U}(m) \times \text{Sp}(k))$, the components of the Ricci tensor are given, respectively, by

\begin{equation*}
\begin{cases} r_1 = -\dfrac{2(m-1)}{2n-1} - \dfrac{1+2k}{2(2n-1)}\dfrac{\lambda_1^2}{\lambda_2^2}, \\
\\
r_2 = -\dfrac{n+k}{2n-1} - \dfrac{m-1}{2(2n-1)}\dfrac{(\lambda_2^2-(\lambda_1 - \lambda_2)^2)}{\lambda_1\lambda_2},
\end{cases}
\end{equation*}
and
\begin{equation*}
\begin{cases} r_1 = -\dfrac{2+2m)}{2n+2} - \dfrac{2k}{4n+4}\dfrac{\lambda_1^2}{\lambda_2^2}, \\
\\
r_2 = -\dfrac{4m+4k+3}{4n+4} + \dfrac{4m+2}{16n+16}\dfrac{\lambda_1}{\lambda_2}.
\end{cases}
\end{equation*}

The Ricci flow equation on the manifold $M$ is defined by
\begin{equation}\label{eqnricci}
\frac{\partial g(t)}{\partial t}=-2Ric(g(t)),
\end{equation}
where $Ric(g)$ is the Ricci tensor of the Riemannian metric $g$. The solution of this equation, the so called Ricci flow, is a $1$-parameter family of metrics $g(t)$ in $M$.

The homogeneous Ricci flow equation for invariant metrics is given by the following systems of ODEs: 

\begin{equation}
\label{riccitype1}
\begin{cases} \dot{x} = \dfrac{2(m-1)}{2n-1} + \dfrac{1+2k}{2(2n-1)}\dfrac{x^2}{y^2}, \\
\\
\dot{y} = \dfrac{n+k}{2n-1} + \dfrac{m-1}{2(2n-1)}\dfrac{(y^2-(x-y)^2)}{xy},
\end{cases}
\end{equation}
for
$$\dfrac{\text{SO}(2n+1)}{\text{U}(k)\times\text{SO}(2m+1)}, \quad n=m+k, \ m>1 \text{ and } k \neq 1,$$
and
\begin{equation}
\label{riccitype2}
\begin{cases} \dot{x} = \dfrac{2m+2)}{2n+2} + \dfrac{2k}{4n+4}\dfrac{x^2}{y^2}, \\
\\
\dot{y} = \dfrac{4m+4k+3}{4n+4} - \dfrac{4m+2}{16n+16}\dfrac{x}{y},
\end{cases}
\end{equation}
for
$$\dfrac{\text{Sp}(n)}{\text{U}(m)\times\text{Sp}(k)}, \quad n=m+k, \ m\geq 1 \text{ and } k \geq 1.$$

In \cite{GrMa12}, using the Poincar\'{e} compactification on the space of invariant Riemannian metrics, the dynamics of (\ref{riccitype1}) and (\ref{riccitype2}) was completely described as follows, respectively.

$$
\gamma_1(t) = \left(\dfrac{2(m-1)}{m+2k}t,t\right), \quad
\gamma_2(t)=\left(2t,t\right), \,\,\, \text{ in the case } (\ref{riccitype1})
$$
and 
$$
\gamma_1(t) = \left(t,\dfrac{1}{4}\dfrac{4k+2m+1}{m+1}t\right), \quad
\gamma_2(t)=\left(t,\dfrac{1}{2}t\right), \,\,\, \text{ in the case } (\ref{riccitype2}).
$$

The global behavior of the Ricci flow on both generalized flag manifolds is described using its phase portrait (see Figure \ref{retratofase}). 

\begin{figure}[!htb]
\label{retratofase}
\centering
\includegraphics[scale=1]{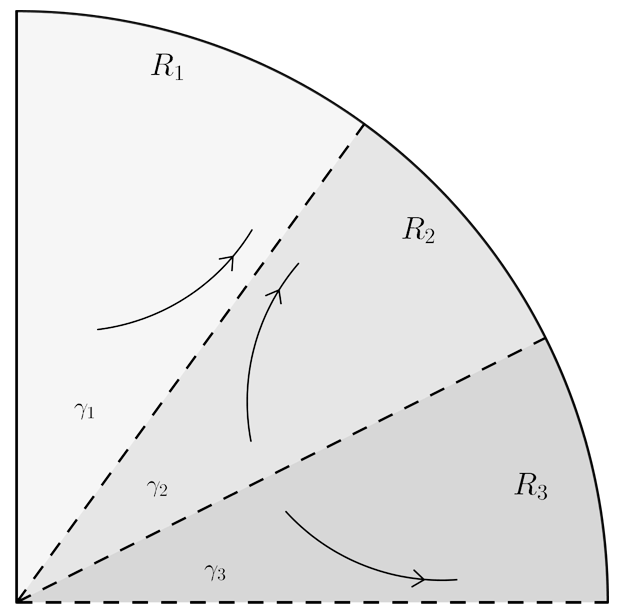}
\caption{Phase portrait of the Ricci flow on \hspace{\linewidth}
$\text{SO}(2n+1)/(\text{U}(k) \times \text{SO}(2n+1))$ and on $\text{Sp}(n)/(\text{U}(m) \times \text{Sp}(k))$.}
\end{figure}

One can describe precisely the ``asymptotic behavior'' of the flows line of the Ricci flow. Let $g_0$ be an invariant metric and $g(t)$ the Ricci flow with initial condition $g_0$. We will denote by $g_\infty$ the limit $\lim_{t\to \infty} g(t)$. 
\begin{theorem}\label{teoricci}{\cite{GrMa12}} 
Let $g_0$ be an invariant metric on $\text{SO}(2n+1)/(\text{U}(k) \times \text{SO}(2n+1))$ or $\text{Sp}(n)/(\text{U}(m) \times \text{Sp}(k))$ and $R_1$, $R_2$, $R_3$, $\gamma_1$ and $\gamma_2$ as described in Figure \ref{retratofase}. We have:
\begin{enumerate}
\item[a)] if $g_0 \in R_1\cup R_2 \cup \gamma_1$ then $g_{\infty}$ is the Einsten (non-K\"ahler) metric;
\item[b)] if $g_0 \in \gamma_2$ then $g_{\infty}$ is the K\"ahler-Einstein metric.
\item[c)] if $g_0 \in R_3$, consider the natural fibration from a flag manifold in a symmetric space $G/K \to G/H$. Then, the Ricci flow $g(t)$ with $g(0) = g_0$ evolves in such a way that the diameter of the base of this fibration converges to zero when $t \to \infty$.
\end{enumerate}
\end{theorem}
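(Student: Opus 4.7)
The plan is to exploit the scale equivariance of the planar vector fields on the right-hand sides of (\ref{riccitype1}) and (\ref{riccitype2}): both systems are invariant under $(x,y,t)\mapsto(cx,cy,ct)$, because every nonconstant term is a rational function of the ratio $u=x/y$. Consequently the dynamics descend to a one-dimensional flow on the projective line of rays through the origin: setting $u=x/y$ and reparametrizing time by $d\tau=dt/y$, one obtains an autonomous ODE $u'(\tau)=H(u)$ on $(0,\infty)$ with $H$ an explicit rational function. Zeros of $H$ correspond exactly to the rays along which the original flow moves radially, i.e.\ to invariant Einstein metrics. A short computation from the explicit formulas shows that $H$ has precisely two positive zeros, which must be the ratios determining $\gamma_1$ (non-K\"ahler Einstein) and $\gamma_2$ (K\"ahler-Einstein).

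The next step is to linearize $H$ at each zero. I expect $H'(u_1^*)<0$ and $H'(u_2^*)>0$, making the ray $\gamma_1$ an attracting fixed point and $\gamma_2$ a repelling one for the projectivized flow. Since these are the only equilibria on $(0,\infty)$, the positive half-line splits into three monotone intervals: two intervals on which $u(\tau)\to u_1^*$, lifting to the sectors $R_1$ and $R_2$, and one interval on which $u(\tau)$ is pushed away from $u_2^*$ toward the boundary of the positive quadrant, lifting to $R_3$. Lifting back to $(x,y)$-space, claim (a) follows: any trajectory starting in $R_1\cup R_2\cup\gamma_1$ has its projective image tending to $u_1^*$, so the direction of $g(t)$ asymptotically aligns with the ray carrying the non-K\"ahler Einstein metric. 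Claim (b) is immediate, because $\gamma_2$ is a flow line by the invariance of any fixed-point ray.

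For part (c), trajectories initialized in $R_3$ have $u(\tau)$ tending monotonically to $0$ or $\infty$, i.e.\ the ratio of the two metric eigenvalues degenerates. To turn this into a collapse statement for the natural fibration $G/K\to G/H$ onto the underlying symmetric space (with $H=\mathrm{SO}(2k)\times\mathrm{SO}(2m+1)$ in case (\ref{t1}) and $H=\mathrm{Sp}(m)\times\mathrm{Sp}(k)$ in case (\ref{t2})), I would identify which of the isotropy summands $\m_1,\m_2$ is tangent to the fiber and which to the base, using the decomposition of the symmetric pair associated to each family. The diameter of the base of the Riemannian submersion is then controlled, up to a constant, by $\sqrt{\lambda_{\text{base}}(t)}$, so (c) reduces to showing $\lambda_{\text{base}}(t)\to 0$ along $R_3$. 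The main obstacle is precisely this last step: the Ricci flow here is unnormalized, both eigenvalues may diverge, and proving that $\lambda_{\text{base}}$ decreases in absolute terms — rather than merely being dominated by $\lambda_{\text{fiber}}$ — requires a delicate non-normalized asymptotic analysis of $(x(t),y(t))$ as $t$ approaches the maximal time, combined with a correct geometric identification of the base versus fiber summand for each family.
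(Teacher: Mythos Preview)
This theorem is not proved in the present paper: it is quoted from \cite{GrMa12}, and the text immediately preceding it states that the original argument proceeds via Poincar\'e compactification of the planar vector fields (\ref{riccitype1}) and (\ref{riccitype2}). There is therefore no in-paper proof to compare against, only the method attributed to the cited reference.

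Your projectivization by the ratio $u=x/y$ is a lightweight substitute for Poincar\'e compactification: rather than compactifying the whole open quadrant, you quotient by the scaling symmetry and study a one-dimensional flow on $(0,\infty)$. For parts (a) and (b) this is adequate in principle---the invariant rays and their local stability are exactly the information needed---but you have left the decisive sign computations $H'(u_1^*)<0$, $H'(u_2^*)>0$ as expectations rather than verified facts, so what you have is a plan, not a proof.

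Part (c) is a genuine gap, and you have diagnosed it yourself. The reduced variable $u$ records only the \emph{ratio} of the two eigenvalues, so its divergence to $0$ or $\infty$ tells you one summand becomes negligible relative to the other, not that either tends to zero absolutely. Worse, your time reparametrization $d\tau=dt/y$ decouples the $\tau$-asymptotics from the $t$-asymptotics exactly in the regime where $y$ is small, so the projectivized flow cannot by itself distinguish collapse from mere relative shrinking. Poincar\'e compactification is designed precisely to track trajectories to the boundary of the quadrant and to read off which coordinate collapses; your projectivization discards that information. Without supplying an absolute estimate on the base eigenvalue---and without actually carrying out the identification of which isotropy summand is tangent to the base of $G/K\to G/H$ in each family---part (c) remains unfinished.
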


\subsubsection{Ricci flow on the space $\ft$}

The isotropy representation of the space $\ft$ decomposes into three non-equivalent irreducible components, that is, $\mt = \m_1 \oplus \m_2 \oplus \m_3$. So, we keep our notation and denote an invariant metric by $(\lambda_{12}, \lambda_{13}, \lambda_{23})$. In the case of the space $\ft$, the components of the Ricci tensor are given by
\begin{equation}
\label{riccift}
\begin{cases} r_{12} = \dfrac{1}{2\lambda_{12}} + \dfrac{1}{12}\left(\dfrac{\lambda_{12}}{\lambda_{13}\lambda_{23}} - \dfrac{\lambda_{13}}{\lambda_{12}\lambda_{23}} - \dfrac{\lambda_{23}}{\lambda_{12}\lambda_{13}}\right), \\
\\
r_{13} = \dfrac{1}{2\lambda_{13}} + \dfrac{1}{12}\left(\dfrac{\lambda_{13}}{\lambda_{12}\lambda_{23}} - \dfrac{\lambda_{12}}{\lambda_{13}\lambda_{23}} - \dfrac{\lambda_{23}}{\lambda_{12}\lambda_{13}}\right), \\
\\
r_{23} = \dfrac{1}{2\lambda_{23}} + \dfrac{1}{12}\left(\dfrac{\lambda_{23}}{\lambda_{12}\lambda_{13}} - \dfrac{\lambda_{13}}{\lambda_{23}\lambda_{12}} - \dfrac{\lambda_{12}}{\lambda_{23}\lambda_{13}}\right).
\end{cases}
\end{equation}

The Ricci flow equation system for a left-invariant metric on $\ft$ is given by
\begin{equation}\label{ricciF3}
\dot{\lambda}_{ij} = -2r_{ij}, \qquad 1 \leq i<j\leq 3.
\end{equation}

The invariant lines of (\ref{ricciF3}) can be described as follows: define $\gamma_j(t) := tp'_j$, where

$p'_1 = (1/\rho_1, (1/\rho_1)(2+2\sqrt{2}), 1/\rho_1),$

$p'_2 = (1/\rho_2,1/\rho_2,1/\rho_2),$

$p'_3 = (\rho_3(-1/2 + \sqrt{2}/2),\rho_3(-1/2 + \sqrt{2}/2),\rho_3),$

$p'_4 = ((1/\rho_1)(2+2\sqrt{2}),1/\rho_1,1/\rho_1),$

$1/\rho_3 = \sqrt{1+2(-1/2+1/2\sqrt{2})^2}$, $\rho_1 = \sqrt{2+(2+\sqrt{2})^2}$ and $\rho_3 = \sqrt{3}$. 

We remark that $\gamma_1, \gamma_2, \gamma_3$ and $\gamma_4$ are solutions for (\ref{riccift}). It is well know that the manifold $SU(3)/T^2$ admits four invariant Einstein metrics: three are K\"ahler-Einstein metrics, represented by $\gamma_1$, $\gamma_3$, $\gamma_4$, and the normal Einstein metric (non-K\"ahler), represented by $\gamma_2$.

Using techniques from dynamical systems (Poincar\'e compactifications, Lyapunov exponents), the behavior of the Ricci flow near the normal Einstein metric is described as follows.
\begin{theorem} \cite{GrMa09}
\label{thricci}
Consider $\varepsilon > 0$ sufficiently small and $\Lambda = (\lambda_{12}, \lambda_{13},\lambda_{23})$ with $\lambda_{ij} > 0$, $\parallel \Lambda \parallel > \delta$ for $\delta > 1/2$ and $d(\Lambda,\gamma_2) < \varepsilon$. Let $g_t$ be the Ricci flow with $g_0$ the metric defined by $(\lambda_{12}, \lambda_{13},\lambda_{23})$. Then $g_\infty$ is a normal (Einstein) metric. In particular, if $\Lambda \notin \gamma_2$, then $g_0$ is left-invariant and $g_\infty$ is bi-invariant.
\end{theorem}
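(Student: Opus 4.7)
The plan is to analyze the Ricci flow ODE system \eqref{ricciF3} as a dynamical system near the invariant ray $\gamma_2$ of normal Einstein metrics. Since $\gamma_2$ is a ray and not a fixed point of the unnormalized flow, my first step would be to perform a Poincar\'e compactification of the vector field $(-2r_{12},-2r_{13},-2r_{23})$ on the positive octant $\{\lambda_{ij}>0\}$. Under this compactification the four invariant rays $\gamma_1,\gamma_2,\gamma_3,\gamma_4$ become equilibria on the sphere at infinity, and convergence of a trajectory to $\gamma_2$ as $t\to\infty$ becomes convergence to the corresponding equilibrium $p_2^\infty$ on the boundary sphere. This reduces the question of asymptotic behaviour to a classical local stability problem.

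Next I would linearize the compactified vector field at $p_2^\infty$ and compute its eigenvalues. The decisive simplification is the $S_3$ symmetry of the system \eqref{riccift}: the formulas for $r_{12},r_{13},r_{23}$ are permuted by any permutation of $(\lambda_{12},\lambda_{13},\lambda_{23})$, and the normal direction $(1,1,1)$ is the unique fixed point of this $S_3$-action. Therefore the linearized operator at $p_2^\infty$ commutes with the $S_3$-representation on the tangent space of the sphere at infinity, which splits as the trivial plus the standard $2$-dimensional representation; Schur's lemma forces the linearization to be diagonal in this splitting with two independent eigenvalues. The radial direction along $\gamma_2$ itself is tangent to the sphere at infinity and gives a zero mode that is killed after projectivising; the remaining eigenvalues can then be read off by computing the derivative of a single component of the Ricci tensor in a transverse direction at $\lambda_{12}=\lambda_{13}=\lambda_{23}$.

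Once the eigenvalue computation shows that both remaining eigenvalues are negative (or equivalently, that the equilibrium $p_2^\infty$ is a hyperbolic sink on the projectivised flow), the Hartman-Grobman theorem yields an open basin of attraction around $p_2^\infty$. Translating back through the Poincar\'e chart, this basin corresponds to a neighbourhood $\{d(\Lambda,\gamma_2)<\varepsilon\}$ intersected with a region $\{\|\Lambda\|>\delta\}$; the lower bound on $\|\Lambda\|$ is what ensures we remain in the chart where the sink's basin is valid and does not compete with the singular behaviour at the origin. Any initial condition $g_0$ in this set therefore evolves so that its projective direction tends to the normal direction, i.e. $g_\infty$ is a scalar multiple of the normal metric.

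Finally, for the ``in particular'' statement, I would invoke the standard fact that a normal metric on the homogeneous space $SU(3)/T^2$ is induced from a multiple of the negative Cartan-Killing form on $\mathfrak{su}(3)$, and multiples of the Killing form are bi-invariant on $SU(3)$; the initial data is only left-invariant in general, but the asymptotic limit inherits the bi-invariance of its lift. The main obstacle I anticipate is the explicit eigenvalue computation after the Poincar\'e compactification: although the $S_3$-symmetry cuts the calculation down essentially to computing a single $2\times 2$ block, verifying the sign of the eigenvalues on the nose requires care with the change of variables used in the compactification and with the normalization of the direction transverse to $\gamma_2$.
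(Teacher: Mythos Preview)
This theorem is not proved in the present paper: it is quoted from \cite{GrMa09}, and the only information the paper gives about the original argument is the sentence immediately preceding the statement, namely that it is obtained ``using techniques from dynamical systems (Poincar\'e compactifications, Lyapunov exponents)''. Your plan is consistent with that description: you compactify, locate $\gamma_2$ as an equilibrium on the sphere at infinity, and analyze its local stability. The $S_3$-symmetry reduction you propose is a genuine simplification of the eigenvalue calculation and is not mentioned in the paper's summary, but it is compatible with the cited approach.

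One small caution: the paper explicitly mentions \emph{Lyapunov exponents} rather than mere linearization, which may indicate that in \cite{GrMa09} the equilibrium $p_2^\infty$ is not fully hyperbolic on the compactified phase space (there can be a neutral direction coming from the scaling/radial mode), so that Hartman--Grobman alone may not suffice and a center-manifold or Lyapunov-function argument is needed to conclude attraction. Also, your description of the radial direction as ``tangent to the sphere at infinity'' is inverted: in the Poincar\'e compactification the sphere at infinity is invariant and the radial direction is transverse to it, contributing a nonzero eigenvalue governing approach to infinity rather than a zero mode to be quotiented out. These are the points where your outline would need to be tightened, but the overall strategy matches what the paper indicates.
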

\subsection{Stability on $\tipoum$, $\tipodois$ and $\ft$ under the homogeneous Ricci flow.}

Now we are going to establish how the stability of a generalized holomorphic-horizontal map $\phi: M^2 \to \flag$ behaves under the Ricci flow of a perturbed invariant metric of $\tipoum$ and $\tipodois$. In the next Theorem, $\flag$ will denote one of those two types of flags. 

\begin{theorem}
Let $\phi: M^2 \to \flag$ be a generalized holomorphic-horizontal map and $(\flag,J,g_0)$ a K\"{a}hler structure with $g_0 = \Lambda = (\lambda_1,\lambda_2)$. Suppose that $g_0^\# = \Lamjv = (\lambda_1, \lambda_2^\#)$ is any $\Pp$-perturbation of $\Lambda$. The following holds:

\begin{enumerate}
\item If $0 < \lambda_2^\# < \lambda_2$, then $\phi$ is unstable with respect to $g_0^\#$ and remains unstable under the Ricci flow, $g_t^\#$, with initial condition $g_0^\#$; 
\item If $0 < \lambda_2 < \lambda_2^\#$, then $\phi$ is stable with respect to $g_0^\#$ and remains stable under the Ricci flow, $g_t^\#$, with initial condition $g_0^\#$ and for $t<\infty$.
\end{enumerate}
\end{theorem}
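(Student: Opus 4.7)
The first step is to note that, by the equiharmonicity result of \cite{GNSM14} quoted above, the map $\phi$ is harmonic with respect to \emph{every} invariant metric and in particular remains harmonic along the entire Ricci flow $g_t^\#$. Consequently, stability at time $t$ is controlled by the second variation $I_{\Lambda^\#(t)}^\phi(q)$ with respect to $\Lambda^\#(t) = (\lambda_1(t),\lambda_2^\#(t))$. The stability/instability claims at $t=0$ are immediate from Theorems \ref{kahlersoma} and \ref{kahlersubtracao} applied to $g_0^\#$ regarded as a $\Pp$-perturbation of the K\"ahler metric $g_0=(\lambda_1,\lambda_2)$ with $\Pp = \Sigma(\Theta)$.

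The geometric heart of the argument is an observation about the phase portraits of (\ref{riccitype1}) and (\ref{riccitype2}): in both systems, the K\"ahler--Einstein line $\gamma_2$ (the locus $\lambda_1 = 2\lambda_2$) is an \emph{explicit} invariant trajectory, as already recorded in \cite{GrMa12}. By uniqueness of solutions for these autonomous ODEs, no integral curve starting off $\gamma_2$ can ever cross it. Hence the two open half-planes $\{\lambda_1 > 2\lambda_2\}$ and $\{\lambda_1 < 2\lambda_2\}$ are positively invariant under the Ricci flow, so the sign of the quantity $\lambda_2^\#(t) - \lambda_1(t)/2$ is constant in $t$ along $g_t^\#$.

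I would then apply Theorems \ref{kahlersoma} and \ref{kahlersubtracao} \emph{at each time $t$}, but with a K\"ahler reference chosen dynamically as $\Lambda(t) := (\lambda_1(t),\,\lambda_1(t)/2) \in \gamma_2$. With respect to $\Lambda(t)$, the flowed metric $g_t^\#$ is a $\Pp$-perturbation whose only nonzero parameter is $\xi(t) = \lambda_2^\#(t) - \lambda_1(t)/2$, and whose sign is constant by the preceding paragraph. In case (1), $\xi(0) < 0$, hence $\xi(t) < 0$ for all admissible $t$, and Theorem \ref{kahlersubtracao} gives that $\phi$ is unstable with respect to $g_t^\#$. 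In case (2), $\xi(0) > 0$, hence $\xi(t) > 0$ along the flow, and Theorem \ref{kahlersoma} yields stability. Since $\phi$ is equiharmonic, the harmonicity hypothesis of these theorems is satisfied at every time.

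The main obstacle I anticipate is not computational but structural: one needs to check that $\gamma_2$ remains an invariant trajectory on the entire maximal interval of existence (exactly the content of \cite{GrMa12}) and that the dynamically chosen reference $\Lambda(t)$ legitimately fits the hypotheses of Theorems \ref{kahlersoma} and \ref{kahlersubtracao} at each fixed $t$, so that the sign of $\xi(t)$ really governs (in)stability. The restriction ``$t < \infty$'' in item (2) is genuine and must be acknowledged: the half-plane $\{\lambda_1 < 2\lambda_2\}$ meets the collapsing region $R_3$ of Theorem \ref{teoricci}, in which the diameter of the base of the twistor fibration tends to zero and the invariant metric degenerates, so the perturbation framework breaks down in the limit. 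In the opposite half-plane, relevant to case (1), the flow converges to the Einstein non-K\"ahler metric lying on $\gamma_1$, the metric stays non-degenerate, and no restriction at infinity is needed.
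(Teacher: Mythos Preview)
Your argument is essentially the paper's own proof: identify the K\"ahler line $\gamma_2$ as an invariant trajectory of the Ricci flow, observe (you via ODE uniqueness, the paper via the phase portrait in Figure~\ref{retratofase}) that the two open sides of $\gamma_2$ are forward-invariant, and then apply Theorems~\ref{kahlersoma} and~\ref{kahlersubtracao} at each time against a K\"ahler reference sharing the first coordinate. One correction: in the conventions of the theorem the $\Pp$-perturbation fixes $\lambda_1$, so $\sigma_1\in\Sigma(\Theta)$ has height~$1$ and the K\"ahler condition is $\lambda_2=2\lambda_1$ (the paper derives this from Borel's description), not $\lambda_1=2\lambda_2$; accordingly your moving reference should be $\Lambda(t)=(\lambda_1(t),\,2\lambda_1(t))$ and the sign-controlled quantity is $\xi(t)=\lambda_2^\#(t)-2\lambda_1(t)$.
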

\begin{proof}
Borel \cite{Borel54} described the K\"{a}ler structures on a generalized flag manifold $\flag$. In fact, $(\flag,J,g_0)$ is K\"{a}hler if and only if $J$ is integrable and, if $\alpha \in \sigma \in \piteta$ can be written as
$$\alpha = \sum_{i=1}^k n_i\alpha_i$$
with $\alpha_i \in \sigma_i \in \Sigma(\Theta)$, then $\lambda_\alpha = \disp\sum_{i=1}^k n_i \alpha_i$, where $n_i \geq 0$ if $\alpha$ is positive.

In our case, both $\tipoum$ and $\tipodois$ have only two isotropy summands, i.e., $\piteta = \{\sigma_1, \sigma_2\}$. Here, $\sigma_1$ is the set of roots whose height is one module $\langle \Theta \rangle$, that is, $\Sigma(\Theta) = {\sigma_1}$. If $\alpha \in \sigma_2$, then $\alpha$ can be written as $\alpha = \alpha_1 + \alpha_2$, where $\alpha_1, \alpha_2 \in \sigma_1$. Then, $\lambda_{\sigma_2} = 2\lambda_{\sigma_1}$ and $g = \Lambda = (1,2)$ is the only K\"{a}hler metric in $\flag$ (up to scale).

Let $g_0 = (\lambda_1,2\lambda_1)$.  Theorem \ref{kahler} states that $\phi$ is stable with respect to $g_0$ and, by theorem \ref{kahlersoma}, we have that $\phi$ is also stable with respect to the perturbed metric $g_{0,1}^\# = (\lambda_1, 2\lambda_1 + \xi_1)$, where $\xi_1 \geq 0$. Also, theorem \ref{kahlersubtracao} says that $\phi$ is unstable with respect to the perturbed metric $g_{0,2}^\# = (\lambda_1,2\lambda_1 - \xi_2)$, where $\xi_2 < 0$ and $2\lambda_1 - \xi_2 > 0$. Observe that, according do figure \ref{retratofase}, $g_{0,1}^\# \in R_3 \cup \gamma_2$ and $g_{0,2}^\# \in R_1 \cup R_2 \cup \gamma_1$. If $g_{t,1}^\#$ and $g_{t,2}^\#$ are the homogeneous Ricci flow for the metrics $g_{0,1}^\#$ and $g_{0,2}^\#$, respectively, then $\phi$ is stable with respect to $g_{t,2}^\#$ for finite $t$ and is unstable with respect to $g_{t,1}^\#$ for all $t$.
\end{proof}

\begin{corollary}
The homogeneous Ricci flow preserves the stability/non-stability of a generalized holomorphic-horizontal map on the homogeneous spaces $\tipoum$ and $\tipodois$.
\end{corollary}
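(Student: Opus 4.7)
The plan is to derive the Corollary as an immediate consequence of the preceding Theorem, upon observing that every invariant metric on $\tipoum$ (and on $\tipodois$) arises as a $\mathcal{P}$-perturbation of a K\"ahler metric, so the dichotomy established in the Theorem exhausts all possible initial conditions for the Ricci flow.

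Concretely, since the isotropy representation decomposes into exactly two non-equivalent irreducible components with $\Sigma(\Theta) = \{\sigma_1\}$, every invariant metric has the form $\Lambda = (\lambda_1, \lambda_2)$ with $\lambda_1, \lambda_2 > 0$, and the K\"ahler invariant metrics are precisely those with $\lambda_2 = 2\lambda_1$ (as recalled in the proof of the preceding Theorem via Borel's description). Given an arbitrary initial metric $g_0 = (\lambda_1, \lambda_2)$, I would take the reference K\"ahler metric to be $\Lambda = (\lambda_1, 2\lambda_1)$ and view $g_0$ as its $\{\sigma_1\}$-perturbation with $\xi_{\sigma_2} = \lambda_2 - 2\lambda_1$. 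The previous Theorem then dispatches all possible cases: if $\lambda_2 < 2\lambda_1$, then $\phi$ is unstable with respect to $g_0$ and remains unstable along the Ricci flow for all $t$; if $\lambda_2 > 2\lambda_1$, then $\phi$ is stable with respect to $g_0$ and remains stable along the Ricci flow for finite $t$; and if $\lambda_2 = 2\lambda_1$, then $g_0$ is itself K\"ahler, stability follows from Lichnerowicz's result (Theorem \ref{kahler}), and the K\"ahler condition is preserved along the invariant line $\gamma_2$ of the Ricci flow by Theorem \ref{teoricci}(b).

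There is essentially no additional obstacle: the substantive work, namely identifying $R_1 \cup R_2 \cup \gamma_1$ with the $\xi < 0$ region and $R_3 \cup \gamma_2$ with the $\xi \geq 0$ region via the phase portrait in Figure \ref{retratofase}, was already carried out in the proof of the preceding Theorem. The Corollary simply asserts unconditionally what that Theorem states relative to a chosen K\"ahler reference structure, and such a reference is canonically available for any invariant metric because the K\"ahler condition cuts out a single ray in the space of invariant metrics.
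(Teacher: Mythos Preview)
Your proposal is correct and matches the paper's approach: the Corollary is stated there without a separate proof, as an immediate consequence of the preceding Theorem. Your observation that every invariant metric on these spaces is a $\mathcal{P}$-perturbation of the K\"ahler metric $(\lambda_1,2\lambda_1)$, together with the trichotomy $\lambda_2^\# \lessgtr 2\lambda_1$ handled by the Theorem (and the boundary K\"ahler case on $\gamma_2$), is exactly the intended reading.
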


Let us consider now the homogeneous space $SU(3)/T^2$ in order to produce an example where the Ricci flow do not preserve the stability of harmonic maps. The following Lemma tell us about the non-stability of the normal metric on full flag manifolds.

\begin{lemma}{\cite{NeGrSM11}}\label{normal-stab}
Consider $(\ft,g)$ equipped with the normal metric. Let $\phi: M^2 \to \ft$ be an arbitrary generalized holomorphic-horizontal map. Then, $\phi$ is unstable with respect to $g$.
\end{lemma}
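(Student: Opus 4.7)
The plan is to realize the normal metric on $\ft$ as a perturbation of an invariant K\"ahler metric that decreases the entry attached to a non-height-one isotropy component, and then invoke Theorem \ref{kahlersubtracao}. To set up the bookkeeping, note that for $\ft$ we have $\Theta = \emptyset$, so $\langle\Theta\rangle = \emptyset$ and $\piteta$ consists of the three irreducible components associated to the positive roots $\alpha_1$, $\alpha_2$, and $\alpha_1+\alpha_2$ of $\mathfrak{sl}(3,\C)$. The height modulo $\langle\Theta\rangle$ coincides with the usual root height, so $\Sigma(\Theta)$ is exactly the pair of summands associated with the simple roots, while the summand $\sigma_0$ attached to $\alpha_1+\alpha_2$ lies in $\piteta \setminus \Sigma(\Theta)$. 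Writing an invariant metric as a triple $(\lambda_{12},\lambda_{13},\lambda_{23})$, where the last entry is $\lambda_{\sigma_0}$, the normal metric is $(c,c,c)$ for some $c>0$.

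Next I would fix an invariant integrable complex structure $J$ on $\ft$ with respect to which $\phi$ is $J$-holomorphic; such a $J$ exists by the very hypothesis that $\phi$ is generalized holomorphic-horizontal. By Borel's characterization of invariant K\"ahler metrics recalled in the proof of the preceding theorem, there is an invariant $J$-K\"ahler metric $\Lambda$, unique up to positive scaling, whose components satisfy the linear relation $\lambda_{\sigma_0} = \lambda_{\sigma_1}+\lambda_{\sigma_2}$ with $\sigma_1,\sigma_2 \in \Sigma(\Theta)$. After rescaling I can take $\Lambda = (c,c,2c)$, so that the restriction of $\Lambda$ to $\Sigma(\Theta)$ already coincides with that of the normal metric.

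Comparing the normal metric $\Lambda^\# = (c,c,c)$ with this reference K\"ahler metric $\Lambda = (c,c,2c)$, the two metrics agree on the height-one components, whereas on the non-height-one component $\sigma_0$ the difference is $\xi_{\sigma_0} = c - 2c = -c < 0$. Hence $\Lambda^\#$ is precisely the type of perturbation of a K\"ahler metric covered by Theorem \ref{kahlersubtracao}, whose conclusion is that $\phi$ is unstable with respect to $\Lambda^\#$, i.e.\ with respect to the normal metric.

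The main technical input is really the preliminary bookkeeping: correctly identifying $\Sigma(\Theta)$ inside $\piteta$ and using Borel's theorem to exhibit a K\"ahler reference metric that already matches the normal metric on the height-one components so that the perturbation is concentrated on $\sigma_0$. Once this identification is in place, the conclusion follows immediately from Theorem \ref{kahlersubtracao}, and in particular it does not depend on which invariant complex structure $J$ is chosen to make $\phi$ generalized holomorphic-horizontal.
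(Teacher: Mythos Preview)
Your argument is correct and is the natural one: the paper does not supply its own proof of this lemma (it is simply quoted from \cite{NeGrSM11}), and deriving it from the tools already recorded here amounts exactly to viewing the normal metric as a $\Sigma(\Theta)$-perturbation of an invariant K\"ahler metric with $\xi_{\sigma_0}<0$ and invoking Theorem~\ref{kahlersubtracao}.

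Two small inaccuracies worth cleaning up. First, in the paper's ordering $(\lambda_{12},\lambda_{13},\lambda_{23})$ the height-two component is the \emph{middle} entry $\lambda_{13}$ (since $\alpha_{13}=\alpha_{12}+\alpha_{23}$), so the reference K\"ahler metric should read $(c,2c,c)$ rather than $(c,c,2c)$; this is purely notational. Second, your claim that the invariant $J$-K\"ahler metric is unique up to scaling is false on $\ft$: for a fixed integrable $J$ the K\"ahler cone is two-dimensional (Borel's relation $\lambda_{\sigma_0}=\lambda_{\sigma_1}+\lambda_{\sigma_2}$ is one linear constraint on three parameters). This does not affect your proof, since all you need is \emph{some} K\"ahler metric agreeing with the normal metric on $\Sigma(\Theta)$, and choosing $\lambda_{\sigma_1}=\lambda_{\sigma_2}=c$ forces $\lambda_{\sigma_0}=2c$, giving the desired comparison $\xi_{\sigma_0}=c-2c<0$.
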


%
%

We have that the complexification of $\su(3)$ is the Lie algebra $\sll(3,\C)$. The root space decomposition of $\sll(3,\C)$ is given as follows. Consider the Cartan subalgebra $\h$ given by diagonal matrices of trace zero. Then, the root system of $\sll(3,\C)$ relative to $\h$ is composed by $\alpha_{ij} := \varepsilon_i - \varepsilon_j$, $1 \leq i \neq j \leq 3$, where $\varepsilon_i$ is the functional given by $\varepsilon_i: diag\{a_1,a_2,a_3\} \to a_i$. A simple system of roots is $\Sigma = \{\alpha_{12}, \alpha_{23}\}$. 

Recall the $SU(3)$-invariant Einstein metrics are described as follow: the K\"ahler-Einstein metrics parametrized by $\Lambda_1=(1,1,2)$, $\Lambda_2=(1,2,1)$, $\Lambda_3=(2,1,1)$; the normal Einstein (non-K\"ahler) metric parametrized by $\Lambda_4=(1,1,1)$.  

In the proposition bellow, we choose a $\mathcal{P}$-perturbation being $\mathcal{P}=\{\alpha_{12} \}$ and show that the {\em equiharmonic} 2-sphere $\phi:S^2\to \ft$ subordinated to $\mathcal{P}$ is unstable under the Ricci flow.    

\begin{theorem}\label{princ2}
Let $\phi: S^2 \to \ft$ be a $J$-holomorphic map subordinated to $\mathcal{P}=\{ \alpha_{12} \}$ and $(\ft,J,g_0)$ a K\"{a}hler-Einstein structure with $g_0 = \Lambda = (2,1,1)$. Consider $\varepsilon$ sufficiently small and a $\Pp$-perturbation of $g_0$ given by $g_0^\# = \Lamjv = (2, 2-\varepsilon, 2-\varepsilon)$. Denote by $g_t^\#$ the Ricci flow with initial condition $g_0^\#$. Then, $\phi$ is stable with respect to $g_0^\#$ and is unstable with respect to $g^\#_\infty$.
\end{theorem}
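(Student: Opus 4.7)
The plan is to combine three results already recorded in the paper: Theorem \ref{kahlersoma} (stability under non-negative $\mathcal{P}$-perturbations of a Kähler metric), Theorem \ref{thricci} (convergence of the Ricci flow near $\gamma_2$ to the normal Einstein metric on $\ft$), and Lemma \ref{normal-stab} (instability of generalized holomorphic-horizontal maps with respect to the normal metric). Everything reduces to checking that the hypotheses of each of these three statements are met for our concrete data.

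First I would verify stability at $t=0$. Since $g_0 = (2,1,1)$ is the Kähler-Einstein metric in question and $g_0^\# = (2, 2-\varepsilon, 2-\varepsilon)$, the increments are $\xi_{\alpha_{12}} = 0$ and $\xi_{\alpha_{13}} = \xi_{\alpha_{23}} = 1-\varepsilon$, both non-negative for $\varepsilon < 1$. Hence $g_0^\#$ is an admissible $\mathcal{P}$-perturbation of $g_0$ with $\mathcal{P} = \{\alpha_{12}\}$. Since $\phi$ is $J$-holomorphic and subordinated to $\mathcal{P} \subseteq \Sigma(\Theta)$, it is in particular generalized holomorphic-horizontal, and Theorem \ref{kahlersoma} immediately gives stability of $\phi$ with respect to $g_0^\#$.

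Next I would identify the limit $g_\infty^\#$. As $\varepsilon \to 0^+$ the starting point $g_0^\#$ tends to $(2,2,2)$, which lies on $\gamma_2$ and is a scalar multiple of the normal Einstein metric. For $\varepsilon$ sufficiently small, $\|g_0^\#\|$ remains well above $1/2$ while $d(g_0^\#, \gamma_2)$ is arbitrarily small, so the hypotheses of Theorem \ref{thricci} are satisfied; the theorem then gives $g_t^\# \to g_\infty^\#$ with $g_\infty^\#$ a normal (bi-invariant) Einstein metric. Applying Lemma \ref{normal-stab} to this limit yields instability of $\phi$ with respect to $g_\infty^\#$, completing the argument. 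The one mildly delicate point is the quantitative compatibility between the $\varepsilon$ in the statement and the smallness parameter demanded by Theorem \ref{thricci}; since both are only required to be ``sufficiently small'', one fixes the former below the threshold imposed by the latter and the three ingredients fit together cleanly.
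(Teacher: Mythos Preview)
Your proposal is correct and follows essentially the same route as the paper: invoke Theorem~\ref{kahlersoma} for stability at $g_0^\#$, check that $g_0^\#$ lies in the basin of $\gamma_2$ so that Theorem~\ref{thricci} forces $g_\infty^\#$ to be the normal Einstein metric, and then conclude instability via Lemma~\ref{normal-stab}. If anything, your version is slightly more explicit than the paper's in verifying the increments $\xi_\sigma \geq 0$ and in noting the compatibility of the two smallness requirements on~$\varepsilon$.
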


\begin{proof}
Since $g_0$ is a K\"{a}hler-Einstein metric, $\phi$ is stable with respect to $g_0$, by Theorem \ref{kahler}. We also remark  (using Theorem \ref{kahlersoma}) that $\phi$ is stable with respect to the $\Pp$-perturbed metric $g_0^\#$. 

Also, $\parallel \Lambda \parallel > 1/2$. The invariant line represented by the normal metric on the  phase portrait of the Ricci flow  is given by $$\gamma_2(t) = (t/\sqrt{3},t/\sqrt{3},t/\sqrt{3}).$$ Then, $$d(g_0^\#, \gamma_2) < \varepsilon.$$ By Theorem \ref{thricci}, $g^\#_\infty$ is a normal Einstein and the result follows from Lemma \ref{normal-stab}.     

\end{proof}

\begin{remark}
According to \cite{BR90} the harmonic 2-sphere subordinated to a simple root $\{\alpha_{12} \}$ described in Theorem \ref{princ2} is a generator of the second homotopy group $\pi_2(SU(3)/T^2)\approx \mathbb{Z}\oplus \mathbb{Z}$ (each generator is represented by a simple root of $\mathfrak{sl}(3,\mathbb{C})$).
\end{remark}
%

\end{document}